\newtheorem{proposition}{Proposition}
\newtheorem{lemma}{Lemma}
\newtheorem{corollary}{Corollary}
\newtheorem{theorem}{Theorem}
\newcommand{\zn}{\mathbb{Z}/n\mathbb{Z}}
\title{The edge-isoperimetric inequality for powers of cycles}
\author{Kristiyan Vasilev\footnote{Institute of Mathematics and Informatics, Bulgarian Academy of Sciences. The research was performed under the Project i-METODE, No BG05SFPR001-3.004-0012, funded under the procedure “Support for the development of project-based doctoral study” from the Programme “Education 2021--2027”, co-financed by the European Union.
}}
\date{}
\begin{document}

\maketitle

\paragraph{MSC 2020:} 05C35, 05C42.
\paragraph{Keywords:} edge-isoperimetric inequality, cycle graph

\begin{abstract}
    This note provides a complete solution to a certain version of the edge-isoperimetric problem for powers of a cycle graph. Namely, it shows that the maximum number of edges inside a vertex subset of $C_n^s$ of size $k$ is achieved by a set of $k$ consecutive vertices.
\end{abstract}

\section{Introduction}

Let $G = (V,E)$ be a simple undirected graph. We consider the graph edge-isoperimetric problem in the following form: given a graph $G$ and $k\in \mathbb{N}$, what is the largest number of edges in an induced subgraph of $G$ with $k$ vertices, i.e 
\[
\max_{|U|=k, U\subset V} e(U).
\]
For a $d$-regular graph, a minimizer of this quantity has the smallest graph perimeter (number of edges between a vertex subset and its complement) among $k$-sets. Recently, this question for Johnson graphs was studied by Raigorosdkii and his students~\cite{dubinin2024lower} (see also references therein).
Various results on this and related quantities can be found in~\cite{chung2004}.

The $s$-th power $G^s$ of graph $G$ is a graph that has the same set of vertices, but in which two vertices are adjacent when their distance in $G$ is at most $s$.
This note solves the edge-isoperimetric problem for the powers $C_n^s$ of the cycle graph $C_n$.

\begin{theorem} \label{main}
    If $n$, $k$, and $s$ are positive integers such that $n\geq k$ and $n > s$, then the maximum 
    \[
    \max_{U\subset V(C_n^s), |U|=k} e(U)
    \]
    is attained by any set of $k$ consecutive vertices of $C_n^s$.
\end{theorem}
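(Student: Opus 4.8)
The plan is to prove the slightly stronger-looking but equivalent statement that \emph{every} arc (set of $k$ consecutive vertices) is a maximiser: since the rotations of $\mathbb{Z}/n\mathbb{Z}$ are automorphisms of $C_n^s$ carrying arcs to arcs, it is enough to show that for an arbitrary $U$ with $|U|=k$ there is \emph{some} arc $A$ of size $k$ with $e(A)\ge e(U)$. One first disposes of the degenerate case $2s\ge n$, where $C_n^s=K_n$ and every $k$-set spans $\binom{k}{2}$ edges, so the statement is automatic. When $2s<n$ the graph $C_n^s$ is $2s$-regular with $ns$ edges, so $e(U)=ns-2s(n-k)+e(\overline U)$; this lets one reduce to $k\le\lfloor n/2\rfloor$ by replacing $U$ with its complement, although the argument below does not actually require it.

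The engine is two-point symmetrisation (polarisation). For $c\in\mathbb{Z}/n\mathbb{Z}$ let $\sigma_c\colon x\mapsto c-x$; this involution has an ``axis'' that cuts the non-fixed vertices into two half-arcs $H^{+}$ and $H^{-}$ with $\sigma_c(H^{+})=H^{-}$. Given $U$, define the polarised set $U^{\sigma_c}$ by keeping every $\sigma_c$-orbit that meets $U$ in $0$ or $2$ points (and every fixed point) as it is, and, in every $2$-orbit $\{u,\sigma_c u\}$ meeting $U$ in exactly one point, putting that point on the $H^{+}$ side; then $|U^{\sigma_c}|=|U|$. I would then establish the polarisation inequality $e(U^{\sigma_c})\ge e(U)$ by the standard contribution-by-contribution comparison over pairs of orbits. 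The only input about the graph is that the adjacency kernel $K(x,y)=\mathbf 1[\,d_{C_n}(x,y)\le s\,]$ is a non-increasing function of cyclic distance, together with the elementary geometric fact that $d_{C_n}(u,v)\le d_{C_n}(u,\sigma_c v)$ whenever $u,v$ lie in the same half-arc $H^{+}$; substituting this into the four-term inequality (in $K(u,v),K(u,\sigma_c v),K(\sigma_c u,v),K(\sigma_c u,\sigma_c v)$) attached to each pair of orbits, and the degenerate, in fact equality, cases involving a fixed point, shows no pair of orbits loses edges, hence $e(U^{\sigma_c})\ge e(U)$.

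Finally I would invoke the fact that a suitable finite sequence of polarisations transforms any vertex set into an arc — the discrete instance of the classical convergence of iterated two-point symmetrisation to the symmetric decreasing rearrangement. In the present finite setting this can be made self-contained by choosing the reflections so that an integer potential such as $\sum_{i\in U}\mathrm{dist}(i,p)^2$ for a fixed $p$ strictly decreases whenever $U$ is not yet an arc around $p$ (the same geometric fact shows each polarisation that moves a point does not increase this sum), and it cannot decrease indefinitely. Applying the polarisation inequality along this sequence yields $e(A)\ge e(U)$ for the resulting arc $A$, and then, as all $k$-vertex arcs are rotations of one another, they all realise $\max_{|U|=k}e(U)$, which is Theorem~\ref{main}. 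I expect the convergence step to be the main thing requiring care; the polarisation inequality is routine and the reductions are immediate. A more computational alternative — iterating a local move that merges two runs of $U$ across a shortest gap — also looks viable, but one should be warned that the na\"ive ``slide a run one step toward the nearest gap'' move can strictly decrease $e(U)$ when the slid run is short (both of its endpoints then ``see past'' it), so that route needs a more careful choice of move and of progress measure.
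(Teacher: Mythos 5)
Your proposal is correct, and it takes a genuinely different route from the paper. The paper argues directly on an extremal set $M$: it fixes a lexicographic tie-breaking (first maximize $e$, then the largest number $f(M)$ of elements of $M$ in a block of length $s$, then a block-length parameter $h(M)$) and performs local vertex exchanges ($v \mapsto u-1$) until the extremal set is forced to be a block. Your route is two-point symmetrization: the polarization inequality $e(U^{\sigma_c})\ge e(U)$ does hold exactly as you describe, since the only facts needed are that $\sigma_c$ is an isometry of the cyclic metric (giving $K(\sigma_c u,\sigma_c v)=K(u,v)$ and the fixed-point cases) and that $d(u,v)\le d(u,\sigma_c v)$ for $u,v$ in the same half-arc, which is a short computation ($2\max$ of the two positions measured from the axis never exceeds $n$); and your convergence step is sound: given $x\notin U$, $y\in U$ with $d(x,p)<d(y,p)$, the reflection $\sigma_{x+y}$ places $p$ and $x$ in the same half-arc (points of a half-arc are weakly closer to the representative of the orbit $\{x,y\}$ lying in it), so polarizing strictly decreases the integer potential $\sum_{i\in U}d(i,p)$ while not decreasing $e(U)$, and a radially decreasing set around $p$ is automatically an arc since each positive distance is realized by at most two points. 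What your approach buys is generality and modularity: it shows with no extra work that arcs maximize $\sum_{u\neq v\in U}K(d(u,v))$ for \emph{every} non-increasing kernel $K$, of which $C_n^s$ is the special case $K=\mathbf 1[\,\cdot\le s\,]$, and it avoids the somewhat delicate case analysis and tie-breaking of the exchange argument. What the paper's approach buys is that it is entirely self-contained and elementary, working with a single extremal set rather than a terminating rearrangement process. Your caution about the na\"ive sliding move is well placed; the paper's proof is essentially a careful implementation of exactly such a local move, made correct by the auxiliary maximizations of $f$ and $h$.
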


Let $n$, $k$, and $s$ be positive integers. We are interested in the maximal number of edges in an induced subgraph of $C_n^s$ with $k$ vertices. Note that if $s+1\geq n/2$, then $C_n^s \cong K_n$, so without loss of generality we may assume throughout this paper that $s<n/2-1$. We may also assume that $k\geq s+2$ as otherwise any set of $k$  consecutive vertices is optimal because all pairs of vertices are adjacent in $C_n^s$.

Note that Theorem~\ref{main} does not provide the classification of maximizers, which remains an open question. The example $U = \{1,3,5\}$ for $n = 6$, $s = 2$ and $k = 3$ shows that the complete classification may be complicated.

\paragraph{Notation.} From now on we will abuse notation slightly and we will associate vertex set of $C_n^s$ with the elements of $\mathbb{Z}/n\mathbb{Z}$, which we will also identify with the elements of $[n]$ by their natural correspondence. We also define the following distances on $[n]$: the counterclockwise, clockwise, and cyclic distances $d_{+}(i,j):=|i-j|$, $d_{-}(i,j) = n-|i-j|$, and $d=\min(d_{+}, d_{-})$. For a vertex $u\in \mathbb{Z}/n\mathbb{Z}$ and a subset $U\subset \mathbb{Z}/n\mathbb{Z}$ such that $u\in U$, we denote the set of the clockwise neighbors of $u$ in the set $U$ by 
\[
N^U_{+}(u):=\left\{v\in U, v\neq u: d_{+}(u, v)\leq s\right \}.
\]
Similarly, 
\[
N^U_{-}(u):=\left\{ v\in U, v\neq u: d_{-}(u,v)\leq s\right \}
\]
and 
\[
N^U(u):=N^U_{-}(u)\cup N^U_{+}(u).
\]
Note that since $n\geq 2s+1$, we have $N^U_{-}(u)\cap N^U_{+}(u) = \varnothing$ for all $u\in C_n^s$ and all $U\subset C_n^s$. 

\section{Application of general bounds}

\subsection{Bound from Tur{\'a}n's theorem}

A classical method for finding the maximal number of edges in a subgraph is by using the celebrated Tur{\'a}n's theorem. 

\begin{theorem}[Tur{\'a}n's] \label{th:turan}
    Let $G$ be a graph with clique number $\omega(G)$ and let $k > \omega(G)$. Then 
    \[
    \max_{U\subset V, |U|=k} e(U) \leq \binom{k}{2} - \frac{\omega(G)}{2} \left \lfloor \frac{k}{\omega(G)} \right \rfloor \left (\left \lfloor \frac{k}{\omega(G)}\right \rfloor - 1 \right).
    \]
\end{theorem}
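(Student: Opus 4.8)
The statement is a form of Tur\'an's theorem, so the plan is to reduce the problem to the classical extremal bound for $K_{\omega(G)+1}$-free graphs and then relax that bound to the stated floor expression. Write $r=\omega(G)$ and let $U$ be any $k$-subset of $V$. Since the clique number of an induced subgraph cannot exceed that of the ambient graph, the induced graph $G[U]$ is $K_{r+1}$-free. Hence it suffices to bound $e(H)$ over all $K_{r+1}$-free graphs $H$ on $k$ vertices; the extremal value of this quantity is exactly what Tur\'an's theorem computes. The hypothesis $k>\omega(G)$ guarantees $\lfloor k/r\rfloor\ge 1$ and that the class is genuinely constrained.

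For the main step I would establish, via Zykov symmetrization, that among $K_{r+1}$-free graphs on $k$ vertices the maximum number of edges is attained by the balanced complete $r$-partite graph $T(k,r)$. Take an extremal $H$ and suppose non-adjacency fails to be transitive, i.e.\ there are vertices $x,y,z$ with $xy,yz\notin E(H)$ but $xz\in E(H)$. If $\deg y<\deg x$ (symmetrically if $\deg y<\deg z$), delete $y$ and replace it by a \emph{clone} of $x$, a new vertex adjacent to exactly $N(x)$ and non-adjacent to $x$. Cloning a vertex of a $K_{r+1}$-free graph preserves $K_{r+1}$-freeness, since two clones are non-adjacent and so no clique can use both, while the edge count changes by $\deg x-\deg y>0$, contradicting extremality. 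In the remaining case $\deg y\ge\deg x$ and $\deg y\ge\deg z$: delete both $x$ and $z$ and replace them by two clones of $y$; the same freeness argument applies, and because $x,z\notin N(y)$ the edge count increases by $2\deg y-(\deg x+\deg z-1)\ge 1>0$. Thus non-adjacency in an extremal graph is an equivalence relation, so $H$ is complete multipartite with at most $r$ parts, and a standard convexity argument shows the edge count is largest when the $r$ part sizes are as equal as possible, namely for $T(k,r)$.

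For the closing arithmetic step, write $k=qr+t$ with $q=\lfloor k/r\rfloor$ and $0\le t<r$, so the Turán graph has $t$ parts of size $q+1$ and $r-t$ parts of size $q$, giving
\[
e(H)\le e\bigl(T(k,r)\bigr)=\binom{k}{2}-\left(t\binom{q+1}{2}+(r-t)\binom{q}{2}\right).
\]
Since $\binom{q+1}{2}\ge\binom{q}{2}$, the subtracted term is at least $r\binom{q}{2}=\frac{r}{2}\lfloor k/r\rfloor\bigl(\lfloor k/r\rfloor-1\bigr)$, which yields precisely the claimed upper bound (an equality exactly when $r\mid k$, and a clean relaxation otherwise).

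I expect the symmetrization to be the only genuinely delicate part, specifically the verification that cloning preserves $K_{r+1}$-freeness and that the two replacement operations force non-adjacency to become transitive; the reduction to $K_{r+1}$-free graphs and the final arithmetic relaxation are routine. If one prefers to avoid symmetrization entirely, an equally valid route is to invoke the classical Tur\'an theorem as a known result and present only the arithmetic step, since the $K_{r+1}$-free reduction makes that theorem immediately applicable.
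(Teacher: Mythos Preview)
Your argument is correct: the reduction to $K_{r+1}$-free graphs via the clique-number observation, the Zykov symmetrization forcing an extremal graph to be complete multipartite on at most $r$ parts, the balancing step, and the final relaxation $t\binom{q+1}{2}+(r-t)\binom{q}{2}\ge r\binom{q}{2}$ all go through as written.

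As for the comparison: the paper does not prove this statement at all. It is quoted as the classical Tur\'an theorem and used as a black box, exactly the alternative you describe in your last paragraph. So your write-up supplies strictly more than the paper does here; if you want to match the paper's treatment, you may simply cite Tur\'an's theorem for the bound $e(G[U])\le e(T(k,r))$ and keep only your final arithmetic paragraph, which is the one piece of content specific to the stated inequality (the passage from the exact Tur\'an number to the looser floor expression).
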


\begin{lemma}
For $2s + 1 \leq n$ one has $\omega(C_n^s) = s+1$.
\end{lemma}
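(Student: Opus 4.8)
The plan is to prove the two inequalities separately. For $\omega(C_n^s)\ge s+1$ I would exhibit the explicit clique $\{0,1,\dots,s\}$ (equivalently $\{1,\dots,s+1\}$ under the identification with $[n]$): these $s+1$ vertices are distinct since $s+1\le n$, and for $0\le i<j\le s$ one has $d_{+}(i,j)=j-i\le s$, hence $d(i,j)\le s$, so every two of them are adjacent in $C_n^s$.

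For $\omega(C_n^s)\le s+1$, fix a clique $U$ and a vertex $u\in U$. Every $w\in U$ satisfies $d(u,w)\le s$, so $U$ is contained in the closed neighbourhood $\{u-s,u-s+1,\dots,u+s\}$, a set of exactly $2s+1$ distinct vertices since $n\ge 2s+1$. Suppose first that $n>3s$. Writing a vertex of this neighbourhood as $u+i$ with $i\in\{-s,\dots,s\}$, two such vertices $u+i,u+j$ are adjacent iff $d(i,j)=\min(|i-j|,\,n-|i-j|)\le s$; but $|i-j|\le 2s$ while $n-|i-j|\ge n-2s>s$, so adjacency is equivalent to $|i-j|\le s$. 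Hence the subgraph induced on this neighbourhood is isomorphic to the $s$-th power of the path on $2s+1$ vertices, whose cliques are precisely the sets of at most $s+1$ consecutive vertices: if $W$ is such a clique with $a=\min W$, $b=\max W$, then $a$ adjacent to $b$ forces $b-a\le s$, so $|W|\le s+1$. Therefore $|U|\le s+1$, which together with the lower bound proves the lemma whenever $n>3s$.

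The remaining regime is the genuine obstacle. Under the standing assumption $s<n/2-1$ it is $2s+3\le n\le 3s$, which is nonempty only for $s\ge 3$; one should also note that for $n=2s+1$ one has $C_n^s=K_n$, so the stated range is really $n\ge 2s+2$. The difficulty is that for $n\le 3s$ the induced subgraph on $\{u-s,\dots,u+s\}$ acquires ``wrap-around'' edges, and a clique of $C_n^s$ need no longer lie in a short interval — already for $n=2s+2$, $s\ge 2$, the set $\{0,1,\dots,s-2,s,2s\}$ is a clique of size $s+1$ that is not an interval. Here I would pass to the complement $\overline{C_n^s}$, the circulant on $\zn$ with connection set $\{s+1,s+2,\dots,\lfloor n/2\rfloor\}$: a clique of $C_n^s$ is an independent set of this circulant, and for $n$ in the relevant range this connection set is small (it has $\lfloor n/2\rfloor-s$ elements, e.g. a single element for $n\in\{2s+2,2s+3\}$), so that its independence number is readily checked to be $s+1$. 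Alternatively one can argue directly: rotate $U$ so that its largest gap straddles the origin, observe that then all gaps are at most $s$ and $U$ avoids the ``middle'' arc $[s+1,n-s-1]$, and bound the number of its elements in $[0,s]$ against the number in $[n-s,n-1]$ — but one must feed in all pairwise-adjacency constraints, not merely those between cyclically consecutive elements, to reach the sharp bound. I expect this bounded-but-fiddly analysis of the small-$n$ window to be where essentially all the effort lies; the main conceptual content is in the case $n>3s$.
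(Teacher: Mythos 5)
Your lower bound and your treatment of the case $n > 3s$ are correct, and your side remarks are accurate: at $n=2s+1$ one has $C_n^s=K_n$ so the lemma fails as literally stated there, and for $2s+2\le n\le 3s$ maximum cliques need not be intervals (your example $\{0,1,\dots,s-2,s,2s\}$ is right). But the proof has a genuine gap: in the range $2s+2\le n\le 3s$ you only sketch two possible strategies and carry out neither. The assertion that the independence number of the complementary circulant with connection set $\{s+1,\dots,\lfloor n/2\rfloor\}$ is ``readily checked to be $s+1$'' is precisely the upper bound to be proved in that range, and your own non-interval example shows nothing about it is automatic; the alternative ``rotate the largest gap to the origin'' sketch is likewise not executed. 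As written, the argument establishes the lemma only for $n>3s$.

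The case split is in fact avoidable, which is the idea your proposal is missing. The paper's proof is a single pairing argument valid uniformly for $n\ge 2s+2$: fix $u$ in a clique $K$; then $K\setminus\{u\}$ lies in the $2s$-element set $\{u\pm 1,\dots,u\pm s\}$, and for each $i=1,\dots,s$ the pair $\{\,u-(s+1-i),\;u+i\,\}$ satisfies $d_{+}=s+1$ and $d_{-}=n-s-1\ge s+1$, hence is a non-edge of $C_n^s$. So $K$ meets each of these $s$ disjoint pairs in at most one vertex, giving $|K|\le 1+s$. Rather than trying to show cliques are (nearly) intervals — which is false for small $n$ — one partitions the punctured neighbourhood of $u$ into $s$ non-edges; this also makes transparent that $n=2s+1$ is the only excluded value.
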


\begin{proof}
    Let $K$ be a clique in $C_n^s$ and let $u\in K$. Then all vertices of $K$ are among the $2s$ neighbors of $u$ in $C_n^s$.  Let $N_{-}(u):=\{u_1, u_2, \ldots, u_s\}$ and $N_{+}(u):=\{v_1, v_2, \ldots, v_s\}$, where $d_{-}(u, u_i)=i$ and $d_{+}(u, v_i)=i$. Then note that for $i=1,2,\ldots, s$, we have  $d(u_{s+1-i}, v_i)=s+1$, so at most one of $u_{s+1-i}$ and $v_i$ is in $K$. This means that $|K|\leq s+1$. 

On the other hand,  any $s+1$ consecutive vertices of $C_n^s$ form a clique, so indeed $\omega(C_n^s)=s+1$.
\end{proof} 

Now the application of Theorem~\ref{th:turan} to $C_n^s$ gives the following bound: 

\begin{corollary}
For $k\geq s+2$ we have
\[
        \max_{U\subset V, |U|=k} e(U) \leq \binom{k}{2} - \frac{s+1}{2} \left \lfloor \frac{k}{s+1} \right \rfloor \left (\left \lfloor \frac{k}{s+1}\right \rfloor - 1 \right).
\]    
\end{corollary}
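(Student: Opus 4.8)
The plan is to obtain this simply by feeding the two ingredients just established into one another: the clique number $\omega(C_n^s) = s+1$ from the preceding lemma, and the general estimate of Theorem~\ref{th:turan}. Concretely, I would take $G = C_n^s$ in Theorem~\ref{th:turan}. The only hypothesis to verify is $k > \omega(G)$: by the lemma (whose hypothesis $2s+1 \le n$ is guaranteed by the standing assumption $s < n/2 - 1$ from the introduction) we have $\omega(C_n^s) = s+1$, and we are working in the regime $k \ge s+2$, so indeed $k > s+1 = \omega(C_n^s)$ and Theorem~\ref{th:turan} applies.

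Having checked the hypothesis, the rest is pure substitution. Plugging $\omega(G) = s+1$ into the conclusion of Theorem~\ref{th:turan},
\[
\max_{U\subset V, |U|=k} e(U) \leq \binom{k}{2} - \frac{\omega(G)}{2} \left \lfloor \frac{k}{\omega(G)} \right \rfloor \left (\left \lfloor \frac{k}{\omega(G)}\right \rfloor - 1 \right),
\]
reproduces verbatim the inequality in the statement of the corollary, with no further algebra required.

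There is essentially no obstacle here: the corollary is a direct specialization, and the only point that needs a moment's care is making sure the clique-number lemma is genuinely applicable (i.e.\ that $n$ is large enough relative to $s$ that $C_n^s$ is not complete), which is already part of the paper's running assumptions. I would therefore keep the proof to a single short paragraph, citing the lemma for $\omega(C_n^s) = s+1$, noting $k \ge s+2 > \omega(C_n^s)$, and then invoking Theorem~\ref{th:turan}.
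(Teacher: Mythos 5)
Your proposal is correct and matches the paper's approach exactly: the paper presents this corollary as an immediate application of Theorem~\ref{th:turan} with $\omega(C_n^s)=s+1$ from the preceding lemma, which is precisely your substitution argument, including the observation that $k\geq s+2>\omega(C_n^s)$ ensures the theorem's hypothesis holds.
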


This gives 
\[
      \max_{U\subset V, |U|=k} e(U) \leq \frac{sk^2}{2(s+1)} + \mathcal{O}(k).
\]

\subsection{Spectral bound}

Another classical method for bounding the number of edges in a subgraph of a graph $G$ is by using the spectral decomposition of the adjacency matrix $A$ of $G$. 
Despite the fact that the following machinery is well known~\cite{brouwer2011spectra}, it is complicated to find the bound in a pure form.
We use the following 

\begin{proposition} \label{pr:edgesQF}
    Let $G=(V,E)$ be a graph and $U\subset V$. If $A$ is the adjacency matrix of $G$ and $\chi_U$ is the characteristic vector of $U$, then 
    \[
    2e(U)=\langle A\chi_U,\chi_U\rangle.
    \]
\end{proposition}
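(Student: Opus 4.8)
The plan is to expand the quadratic form $\langle A\chi_U,\chi_U\rangle$ directly in coordinates. Writing it as $\chi_U^{\top} A\chi_U = \sum_{i,j\in V} A_{ij}(\chi_U)_i(\chi_U)_j$ and using that $(\chi_U)_i = 1$ precisely when $i\in U$ and $0$ otherwise, the double sum collapses to $\sum_{i,j\in U} A_{ij}$, i.e.\ the sum of all entries of the principal submatrix of $A$ indexed by $U$.

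Next I would invoke the two defining properties of the adjacency matrix of a simple undirected graph: $A$ is symmetric with $A_{ij}\in\{0,1\}$ and $A_{ij}=1$ iff $\{i,j\}\in E$, and $A_{ii}=0$ since there are no loops. Consequently $\sum_{i,j\in U} A_{ij}$ contributes nothing on the diagonal and counts every edge of the induced subgraph $G[U]$ exactly twice, once for the ordered pair $(i,j)$ and once for $(j,i)$. Hence $\langle A\chi_U,\chi_U\rangle = 2e(U)$, which is the claim.

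There is no genuine obstacle here; the statement is a bookkeeping identity, and the only point requiring a little care is the factor of $2$, which arises from summing over ordered rather than unordered pairs, together with the vanishing of the diagonal terms (no loops). As an alternative one could write $\langle A\chi_U,\chi_U\rangle = \sum_{u\in U}(A\chi_U)_u = \sum_{u\in U}|N(u)\cap U| = \sum_{u\in U}\deg_{G[U]}(u)$ and then apply the handshake lemma, but the direct double-sum expansion is the most transparent route.
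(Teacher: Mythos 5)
Your proof is correct, and it is essentially the same elementary bookkeeping as the paper: the paper's proof is exactly your ``alternative'' route, reading $(A\chi_U)_u$ as $|N(u)\cap U|=\deg_{G[U]}(u)$ and summing degrees, while your primary double-sum expansion over ordered pairs is just that same count unfolded. Either phrasing is fine, and your care about the diagonal and the factor of $2$ matches the content of the handshake step in the paper.
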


\begin{proof}
    Note that $(A\chi_U)_i$ is exactly the number of neighbors of $i$ among the vertices of $U$. Therefore $\langle A\chi_U, \chi_U\rangle$ is equal to the sum of the degrees of the vertices of $U$ with respect to the subgraph of $G$ induced by $U$, which is equal to $2e(U)$.
\end{proof} 

Since $A$ is real and symmetric, it admits an orthonormal eigenbasis $v_1, v_2, \ldots, v_n$. Let $\lambda_1\geq \lambda_2 \geq \ldots \lambda_n$ be the corresponding eigenvalues.

\begin{lemma}
    Let $U\subset V$ and $c_i:=\langle v_i, \chi_U\rangle$ for $i=1,2,\ldots, n$. Then 
\[
        \sum_{i=1}^n c_i^2 = |U|.
\]
\end{lemma}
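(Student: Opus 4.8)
This is a direct application of Parseval's identity (orthonormal expansion). The plan is as follows. Since $v_1,\dots,v_n$ form an orthonormal basis of $\mathbb{R}^n$, every vector — in particular $\chi_U$ — can be written as $\chi_U=\sum_{i=1}^n c_i v_i$ with $c_i=\langle v_i,\chi_U\rangle$. First I would compute $\langle \chi_U,\chi_U\rangle$ by expanding both copies in this basis and using orthonormality $\langle v_i,v_j\rangle=\delta_{ij}$, which collapses the double sum to $\sum_{i=1}^n c_i^2$. Then I would separately evaluate $\langle\chi_U,\chi_U\rangle$ coordinatewise: $\chi_U$ has a $1$ in each coordinate indexed by an element of $U$ and $0$ elsewhere, so $\langle\chi_U,\chi_U\rangle=\sum_{j\in V}(\chi_U)_j^2=|U|$. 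Combining the two evaluations gives $\sum_{i=1}^n c_i^2=|U|$.

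There is essentially no obstacle here; the only thing to be careful about is stating that $\{v_i\}$ is indeed an orthonormal \emph{basis} (not merely an orthonormal set), which is guaranteed because $A$ is real symmetric and hence diagonalizable with an orthonormal eigenbasis, as already noted in the excerpt just before the lemma. One could alternatively phrase the argument purely in terms of the change-of-basis matrix $P=[v_1\ \cdots\ v_n]$ being orthogonal, so that $\|P^\top \chi_U\|^2=\|\chi_U\|^2$, but the coordinatewise computation is cleaner and self-contained.
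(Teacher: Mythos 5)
Your proposal is correct and follows essentially the same route as the paper: expand $\chi_U$ in the orthonormal eigenbasis, use $\langle v_i,v_j\rangle=\delta_{ij}$ to reduce $\langle\chi_U,\chi_U\rangle$ to $\sum_i c_i^2$, and note that $\langle\chi_U,\chi_U\rangle=|U|$ since $\chi_U$ is a $0$--$1$ vector. No gaps; the remark about $\{v_i\}$ being a genuine orthonormal basis is a fine (if optional) point of care.
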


\begin{proof}
    Note that $\chi_U=\sum_{i=1}^n c_iv_i$. Therefore,
\[
   |U|= \langle \chi_U, \chi_U\rangle = \sum_{i, j=1}^n c_ic_j\langle v_i,v_j\rangle =\sum_{i=1}^n c_i^2.
\]
\end{proof} 

\begin{lemma}
    If $U\subset V$ and $c_i:=\langle v_i, \chi_U \rangle$, we have 
    \[
    2e(U) = \sum_{i=1}^n \lambda_i c_i^2.
    \]
\end{lemma}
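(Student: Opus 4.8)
The plan is to combine Proposition~\ref{pr:edgesQF} with the eigenbasis expansion of $\chi_U$ that has already been set up. First I would recall that, since $v_1,\dots,v_n$ form an orthonormal basis of $\mathbb{R}^n$, we may write $\chi_U=\sum_{i=1}^n c_i v_i$ with $c_i=\langle v_i,\chi_U\rangle$ exactly as in the defining formula. Applying the adjacency matrix and using $Av_i=\lambda_i v_i$ then gives $A\chi_U=\sum_{i=1}^n \lambda_i c_i v_i$.

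The key computational step is to pair this against $\chi_U=\sum_{j=1}^n c_j v_j$ and expand the resulting double sum $\langle A\chi_U,\chi_U\rangle=\sum_{i,j=1}^n \lambda_i c_i c_j\langle v_i,v_j\rangle$. Orthonormality collapses it, since $\langle v_i,v_j\rangle$ vanishes unless $i=j$, leaving $\langle A\chi_U,\chi_U\rangle=\sum_{i=1}^n \lambda_i c_i^2$. Invoking Proposition~\ref{pr:edgesQF}, which identifies the left-hand side with $2e(U)$, finishes the proof.

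There is no genuine obstacle here; this is the standard quadratic-form-in-an-eigenbasis identity. The only thing worth flagging is that the manipulation relies on $A$ being real symmetric so that an orthonormal eigenbasis with real eigenvalues exists, but this is precisely the hypothesis fixed in the paragraph preceding the lemma, so it may be cited without further comment.
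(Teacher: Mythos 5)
Your argument is correct and coincides with the paper's proof: both invoke Proposition~\ref{pr:edgesQF}, expand $\chi_U$ in the orthonormal eigenbasis, apply $A$ termwise, and collapse the double sum via orthonormality. Nothing further is needed.
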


\begin{proof}
    By Proposition~\ref{pr:edgesQF} we have 
 \[
 2e(U)=\langle A\chi_U, \chi_U\rangle = \left \langle\sum_{i=1}^n \lambda_i c_i v_i, \sum_{j=1}^n c_iv_i \right \rangle = \sum_{i,j=1}^n \lambda_i c_ic_j\langle v_i, v_j \rangle = \sum_{i=1}^n \lambda_i c_i^2.
 \]
\end{proof} 

\noindent For regular graphs, the largest eigenvalue of the adjacency matrix, as well as its corresponding eigenvector, are known explicitly \cite{brouwer2011spectra}.

\begin{lemma}
    For a $d$-regular graph $G$, we have $\lambda_1 = d$ and $v_1=(1, 1, 1,\ldots, 1)^T$.
\end{lemma}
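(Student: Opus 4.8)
The plan is to check directly that the all-ones vector is an eigenvector with eigenvalue $d$, and then to bound every eigenvalue by $d$ from above.

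\emph{The vector $\mathbf{1}$ is an eigenvector.} Writing $\mathbf{1}=(1,1,\ldots,1)^T$, the $i$-th coordinate of $A\mathbf{1}$ equals $\sum_{j=1}^{n}A_{ij}$, which is the number of neighbors of $i$, i.e.\ its degree; by $d$-regularity this is $d$ for every $i$. Hence $A\mathbf{1}=d\,\mathbf{1}$, so $d$ is an eigenvalue of $A$ with eigenvector $\mathbf{1}$ (strictly speaking the orthonormal eigenbasis uses $\tfrac{1}{\sqrt{n}}\mathbf{1}$; we keep the unnormalized form to match the statement).

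\emph{No eigenvalue exceeds $d$.} Since $A$ is real symmetric, the Rayleigh quotient gives $\lambda_1=\max_{x\neq 0}\langle Ax,x\rangle/\langle x,x\rangle$. For any $x$ one has
\[
\langle Ax,x\rangle=\sum_{i,j=1}^{n}A_{ij}x_ix_j=2\sum_{\{i,j\}\in E}x_ix_j,
\]
and, using $2x_ix_j\leq x_i^2+x_j^2$ together with the fact that $x_i^2$ is counted once for each edge incident to $i$,
\[
2\sum_{\{i,j\}\in E}x_ix_j\leq\sum_{\{i,j\}\in E}(x_i^2+x_j^2)=\sum_{i=1}^{n}\deg(i)\,x_i^2=d\,\langle x,x\rangle.
\]
Therefore $\lambda_1\leq d$, and combined with the previous paragraph $\lambda_1=d$. (An alternative without the Rayleigh quotient: if $Av=\lambda v$ with $v\neq 0$ and $|v_i|$ is maximal among the coordinates, then $|\lambda|\,|v_i|=|(Av)_i|=\bigl|\sum_{j:\{i,j\}\in E}v_j\bigr|\leq d\,|v_i|$, so $|\lambda|\leq d$.)

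This is a standard fact, so I do not expect a genuine obstacle; the only point requiring care is to pair the inequality $\langle Ax,x\rangle\leq d\langle x,x\rangle$ with the explicit eigenvector $\mathbf{1}$ attaining equality, so as to conclude $\lambda_1=d$ exactly rather than merely $\lambda_1\leq d$. If one additionally wanted that $\mathbf{1}$ is, up to scaling, the unique $\lambda_1$-eigenvector when $G$ is connected, one would analyze the equality case $2x_ix_j=x_i^2+x_j^2$, which forces $x_i=x_j$ along every edge and hence on every connected component; but this is not needed for the statement as written.
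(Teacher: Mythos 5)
Your proof is correct and complete: the computation $A\mathbf{1}=d\,\mathbf{1}$ together with the Rayleigh-quotient bound $\langle Ax,x\rangle=2\sum_{\{i,j\}\in E}x_ix_j\leq\sum_{\{i,j\}\in E}(x_i^2+x_j^2)=d\langle x,x\rangle$ (or, alternatively, your maximal-coordinate argument) establishes exactly the claimed statement. Note, however, that the paper does not prove this lemma at all --- it is quoted as a known fact with a citation to the spectral graph theory literature --- so there is no proof to compare against; what your argument buys is self-containedness. Your parenthetical remarks are also the right ones to make: strictly speaking the vector in the orthonormal eigenbasis is $\tfrac{1}{\sqrt{n}}\mathbf{1}$ rather than $\mathbf{1}$ (the paper's later corollary implicitly uses this normalization when computing $c_1=k/\sqrt{n}$), and uniqueness of the top eigenvector requires connectedness, which does hold for $C_n^s$ but is not needed for the statement as written.
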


 We can now apply the lemmas to obtain the following:

\begin{corollary}
    Let $U\subset C_n^s$ with $|U|=k$, then 
    \[
    e(U)\leq sk^2/n + \lambda_2(k/2-k^2/2n).
    \]
\end{corollary}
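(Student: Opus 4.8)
The plan is to combine the three preceding lemmas together with the regularity of $C_n^s$. First I would record that $C_n^s$ is $2s$-regular, so by the lemma on regular graphs $\lambda_1 = 2s$ and the top eigenvector (normalized to be a unit vector in the orthonormal basis $v_1,\dots,v_n$) is $v_1 = \frac{1}{\sqrt n}(1,1,\dots,1)^T$. Consequently $c_1 = \langle v_1,\chi_U\rangle = \frac{1}{\sqrt n}\,|U| = k/\sqrt n$, hence $c_1^2 = k^2/n$.

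Next I would invoke the lemma giving $2e(U) = \sum_{i=1}^n \lambda_i c_i^2$ and split off the top term:
\[
2e(U) = \lambda_1 c_1^2 + \sum_{i=2}^n \lambda_i c_i^2 .
\]
Since the eigenvalues are ordered, $\lambda_i \le \lambda_2$ for every $i \ge 2$, and each $c_i^2 \ge 0$, so $\sum_{i=2}^n \lambda_i c_i^2 \le \lambda_2 \sum_{i=2}^n c_i^2$. By the lemma $\sum_{i=1}^n c_i^2 = |U| = k$, so $\sum_{i=2}^n c_i^2 = k - c_1^2 = k - k^2/n$. Plugging in $\lambda_1 = 2s$ and $c_1^2 = k^2/n$ yields
\[
2e(U) \le 2s\cdot \frac{k^2}{n} + \lambda_2\!\left(k - \frac{k^2}{n}\right),
\]
and dividing by $2$ gives the claimed bound $e(U) \le sk^2/n + \lambda_2(k/2 - k^2/2n)$.

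There is essentially no obstacle here — the only point requiring a little care is the normalization of $v_1$ (the displayed lemma writes the all-ones vector, but in the orthonormal basis it must be scaled by $1/\sqrt n$, which is exactly what produces $c_1^2 = k^2/n$ rather than $k^2$), and the observation that the tail estimate $\sum_{i\ge 2}\lambda_i c_i^2 \le \lambda_2\sum_{i\ge 2}c_i^2$ is valid irrespective of the sign of $\lambda_2$. The genuine work, of course, lies later in estimating $\lambda_2 = \lambda_2(C_n^s)$ and comparing the resulting bound with the edge count of an interval of $k$ consecutive vertices; this corollary is only the setup.
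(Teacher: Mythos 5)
Your proof is correct and follows essentially the same route as the paper: split off the top eigenvalue term in $2e(U)=\sum_i \lambda_i c_i^2$, use $\lambda_1=2s$ with $c_1^2=k^2/n$, and bound the tail by $\lambda_2\bigl(k-k^2/n\bigr)$. Your remark about normalizing $v_1$ to $\tfrac{1}{\sqrt n}(1,\dots,1)^T$ is a point the paper glosses over but uses implicitly, so no substantive difference remains.
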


\begin{proof} We have 
\[
    2e(U) = \sum_{i=1}^n \lambda_i c_i^2  \leq 2s c_1^2 + \lambda_2\left(\sum_{i=2}^n c_i^2\right)\leq  2s c_1^2 + \lambda_2\left(|U|-c_1^2\right) = \frac{2sk^2}{n} + \lambda_2 \left(k - \frac{k^2}{n} \right).
\]
\end{proof}

\subsection{Numerical comparison}
We present a numerical comparison of the general methods with the exact value for various values of $k$ and $s$ when $n=1000$.
\begin{table}[h!]
\centering
\begin{tabular}{|r|r|r|r|r|}
\hline
$k$ & $s$ & Exact maximum & Spectral bound & Tur{\'a}n's bound \\
\hline
 54 &  37 & 1295   & 1980   & 1431   \\
118 &  53 & 4823   & 6149   & 6849   \\
359 &  16 & 5608   & 5737   & 60691  \\
210 & 115 & 17480  & 22511  & 21945  \\
243 & 175 & 27125  & 36369  & 29403  \\
313 & 295 & 48675  & 61627  & 48828  \\
433 & 196 & 65562  & 73512  & 93331  \\
404 & 372 & 80910  & 88116  & 81406  \\
439 & 384 & 94656  & 99895  & 96141  \\
473 & 462 & 111573 & 112499 & 111628 \\
\hline
\end{tabular}
\caption{Comparison of the exact answer and bounds for various $k$ and $s$}
\label{tab:bounds}
\end{table}
As observed from the data, the spectral bound is particularly accurate when $s \ll k$ holds. 

Conversely, when $k$ and $s$ are close, i.e., $s \approx k$, the Tur{\'a}n's bound provides a good approximation. Recall that Tur{\'a}n's's theorem gives
\[
\text{Tur{\'a}n's bound} = \binom{k}{2} - \frac{s+1}{2} \left \lfloor \frac{k}{s+1} \right \rfloor \left (\left \lfloor \frac{k}{s+1}\right \rfloor - 1 \right) = \frac{k(k-1)}{2},
\]
which is close to the exact maximum
\[
sk - \frac{s(s+1)}{2} \approx k\left(k-\frac{1}{2}\right) - \frac{k^2}{2} \approx \frac{k(k-1)}{2},
\]
since in this regime $k - \frac{s+1}{2} \approx \frac{k-1}{2}$ and $s \approx k$.

However, in the intermediate regime where both $k$ and $s$ are large but $k/s \gtrsim 2$,  neither bound provides an accurate approximation, as both the spectral and Tur{\'a}n's estimates deviate significantly from the exact maximum.

\section{Proof of the main result}

\begin{proof}[Proof of Theorem~\ref{main}]
Recall that we may assume $n/2 + 1, k \geq s + 2$. 
Let $\mathcal{S}$ be the family of all sets $S \subset \zn$ with $k$ elements that maximize the number of edges in the induced subgraph. A subset $P\subseteq \zn$ is called a \textit{block} if it consists of consecutive elements of $\mathbb{Z}/n\mathbb{Z}$, i.e $P=\{p_1,p_2,\ldots, p_r\}$, where $d(p_i, p_{i+1})=1$ for $i=1,2,\ldots, r-1$. For a set $U \in \mathcal{S}$ define $f(U)$ as the maximal number of elements of $U$ contained in a block of size $s$. For a subset $T\in \zn$ let $l_T, r_T\in T$ be the endpoints of $T$ such that $T$ is located clockwise from $l_T$ and counterclockwise starting from $r_T$. For a vertex $u\in \zn$ and a subset $U$ of $\zn$, such that $u \in U$, define $h(u,U)$ as the largest block consisting of elements of $U$ that contains $u$. Let $g(U)\subset \mathcal{P}(U)$ be the set of all subsets of $U$ with $f(U)$ elements that are contained in a block of size $s$.

Let $h(U):=\max_{T\in g(U)} h(l_T, T)$ and let $w(U)$ be a subset $T\in g(U)$ such that $h(U)=h(l_T,T)$.

We are now ready to choose an extremal element. Let $\mathcal{S}_1$ be the family of all sets $S\in \mathcal{S}$ that maximize $f(S)$. Let $\mathcal{S}_2$ be the family of all sets $S\in \mathcal{S}_1$ for which the quantity $h(S)$ is maximized. Pick any $M\in \mathcal{S}_2$ and let for simplicity $A:=w(M)$, $u:=l_A$, and $\mathcal{B}$ be a block of size $s$ that contains $A$ and let $\mathcal{A}\subset M$ be the block of size $h(M)$ that contains $u$.

Without loss of generality we may assume that $\mathcal{B}\subseteq \mathcal{A}$ or $\mathcal{A}\subseteq \mathcal{B}$ as otherwise we can just shift $A$ counterclockwise until we reach the end of $\mathcal{B}$ while preserving the maximality of $A$, so we may also assume that $u-1\notin M$. 

Assume for contradiction that the set $C:=\{w\in M\setminus A:d_{-}(w, M)>s\}$ is nonempty and let $v\in C$ be such that $d_{-}(v, M)$ is minimal. Consider the set $M':=M\setminus \{v\}\cup \{u-1\}$. Note that $u-1$ is adjacent to all vertices in $N^M_{+}(v)$ as all clockwise neighbors of $v$ lie in the set $\{w\in M\setminus A: d_{-}(w, M)\leq s\}$ by the minimality of $d_{-}(v, M)$ on $C$. We further have that $|N^M_{-}(v)|+1\leq |A|$ as $v$ and all of its counterclockwise neighbors lie in a block of size $s$ and so by maximality of $A$, it must have at least as many elements. Since $u-1$ is adjacent to at least $|A|-1$ elements from $\mathcal{B}$ (as $|\mathcal{B}|\leq s$, at most one vertex is $s+1$ units apart from $u-1$), which shows that replacing $v$ with $u-1$ gains $|A|-1-|N^M_{-}(v)|\geq 0$ edges in $M'$. However, the latter contradicts the maximality of $h(M)$ as adding $u-1$ increases $\mathcal{A}$

Thus, all $v \in M$ satisfy at least one of the following: 
\begin{enumerate}
    \item $v\in A$;
    \item $v \in \mathcal{A}$;
    \item $v\in N^M_{-}(u)$.
\end{enumerate}

We now show that $A\subseteq \mathcal{A}$. Assume for contradiction that $r_A\notin \mathcal{A}$. Then replace $r_{A}$ by $u-1$. We have that $u-1$ is adjacent to all the other vertices in $A$ (except maybe $r_{A}$, which we removed) and is adjacent to all other vertices in $M$ as all of them are on $N^M_{-}(u)$. Hence, the number of edges has not decreased, while $\mathcal{A}$ has increased, which again contradicts to the maximality of $h(M)$.  Thus, $A\subseteq \mathcal{A}$. 

Finally, if $v \in N^M_{-}(u)\setminus \mathcal{A}$, then note that we can replace $v$ by $u-1$ as all elements of $N^M_{-}(u)$ are neighbors of $u-1$ and $u-1$ has $\min(|\mathcal{A}|,s)$ neighbors from $\mathcal{A}$, which is at least the number of neighbors of $v$ in $\mathcal{A}$. This once again contradicts with the maximality of $\mathcal{A}$.  Hence $|\mathcal{A}|=k$ as required.

\end{proof}

\begin{corollary}
    If $k+s < n$, then we have 
    \[
    \max_{U\subset C_k^s} e(U) = sk - \frac{s(s+1)}{2}.
    \]
\end{corollary}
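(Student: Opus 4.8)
The plan is to obtain the corollary as an immediate consequence of Theorem~\ref{main}. That theorem tells us the maximum is attained by a block of $k$ consecutive vertices, so the only work left is to count the edges such a block induces in $C_n^s$. Up to the rotational symmetry of $C_n^s$ I would take $U=\{1,2,\dots,k\}\subseteq\zn$; by the reductions recorded in the introduction we may assume $k\ge s+2$ (for $k\le s+1$ the set $U$ is a clique and one checks the formula by hand).

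First I would determine which pairs inside $U$ are edges. For $1\le i<j\le k$ we have $d_{+}(i,j)=j-i\le k-1$, and the hypothesis $k+s<n$ gives $d_{-}(i,j)=n-(j-i)\ge n-(k-1)>s$. Hence $d(i,j)=j-i$, and $\{i,j\}\in E(C_n^s)$ exactly when $j-i\le s$; that is, $U$ induces the graph whose edges are precisely the pairs at counterclockwise distance at most $s$.

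Then I would simply enumerate those pairs: for each fixed difference $t\in\{1,\dots,s\}$ there are $k-t$ pairs $\{i,i+t\}$ with $1\le i<i+t\le k$, and each such count is positive since $t\le s\le k-2$. Summing,
\[
e(U)=\sum_{t=1}^{s}(k-t)=sk-\frac{s(s+1)}{2},
\]
and together with the upper bound of Theorem~\ref{main} this gives the claimed value of the maximum.

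There is no genuinely hard step here --- the content is entirely in Theorem~\ref{main}. The one place to be careful is the distance computation: the hypothesis $k+s<n$ is exactly what prevents the cyclic distance between two vertices of a length-$k$ block from wrapping around the cycle, so that it agrees with the ordinary difference. If instead $k+s\ge n$, one would additionally have to count the edges produced by the short way around the cycle, and the closed form above would no longer hold in general.
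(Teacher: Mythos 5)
Your proposal is correct and follows the same overall route as the paper: invoke Theorem~\ref{main} to reduce to $U=\{1,\dots,k\}$, then count $e(U)$ directly. The only difference is in how the count is organized. The paper sums vertex degrees and splits into the cases $k\ge 2s$ and $k<2s$; you instead count, for each difference $t\in\{1,\dots,s\}$, the $k-t$ pairs $\{i,i+t\}$ inside the block, which gives $\sum_{t=1}^{s}(k-t)=sk-\tfrac{s(s+1)}{2}$ in one stroke and avoids the case split entirely. You also make explicit the role of the hypothesis $k+s<n$ (no pair in the block is adjacent ``the short way around''), which the paper leaves implicit; both are welcome small improvements. One caveat about your parenthetical remark: for $k\le s+1$ the block is a clique with $\binom{k}{2}$ edges, and this equals $sk-\tfrac{s(s+1)}{2}$ only when $k=s+1$; for $k\le s$ the stated formula is actually false (e.g.\ $k=2$, $s=3$ gives maximum $1$, not $0$). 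So ``one checks the formula by hand'' does not go through there; the statement should be read under the paper's standing assumption $k\ge s+2$ (or at least $k\ge s+1$), which your main computation uses anyway since you need $k-t\ge 0$ for all $t\le s$.
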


\begin{proof}
    Let $U:=\{1, 2, \ldots, k\}$. By the previous theorem it suffices to compute $e(U)$, which we will do by counting the degree of each vertex.\\
    \noindent \textit{The case $k\geq 2s$.} Each of the vertices $s+1, \ldots, k-s$ has $2s$ neighbors. The vertices $i\in \{1,2,\ldots, s\}$ and $k-i\in \{k-s+1,\ldots, k\}$ are of degree $s+i-1$. This gives a total of  
    \[
    \frac{2s(k-2s)+2\sum_{i=1}^s s+i-1}{2} = sk-\frac{s(s+1)}{2}
    \]
    edges. 

    \textit{The case $k<2s$.}  Here each of the vertices $i, k-i\in \{1, 2, \ldots, k-s\}$ is of degree $i-1+s$. Also each vertex $i$ for $i=k-s+1,k-s+2,\ldots, s$ has $i-1+k-i=k-1$ neighbors, which gives a total of 
    \[
    \frac{(2s-k)(k-1)+2\sum_{i=1}^{k-s} (i-1+s)}{2}=sk- \frac{s(s+1)}{2}
    \]
    edges.
\end{proof}

\bibliography{main}

\begin{thebibliography}{1}

\bibitem{brouwer2011spectra}
Andries~E. Brouwer and Willem~H. Haemers.
\newblock {\em Spectra of graphs}.
\newblock Springer Science \& Business Media, 2011.

\bibitem{chung2004}
F.~Chung.
\newblock Discrete isoperimetric inequalities.
\newblock In {\em Surveys in Differential Geometry IX, International Press},
  pages 53--82, 2024.

\bibitem{dubinin2024lower}
Nikita~A. Dubinin, Elizaveta~A. Neustroeva, Andrei~M. Raigorodskii, and
  Yakov~K. Shubin.
\newblock Lower and upper bounds for the minimum number of edges in some
  subgraphs of the {J}ohnson graph.
\newblock {\em Matematicheskii Sbornik}, 215(5):71--95, 2024.

\end{thebibliography}
\bibliographystyle{plain}

\end{document}